\font\tencmmib=cmmib10 \skewchar\tencmmib '60
\def\lessim{\ \lower4pt\hbox{$
		\buildrel{\displaystyle <}\over\sim$}\ }
\def\gessim{\ \lower4pt\hbox{$\buildrel{\displaystyle >}
		\over\sim$}\ }
\def\n{\noindent}
\def\n{\noindent}
\def\Bla{\Big{\langle}}
\def\Bra{\Big{\rangle}}
\def\bla{\langle}
\def\bra{\rangle}
\def\la{\langle}
\def\ra{\rangle}
\newcommand{\e}{\mathbb{E}}
\newtheorem{lemma}{\bf Lemma}
\newtheorem{theorem}{\bf Theorem}
\newtheorem{corollary}{\bf Corollary}
\newtheorem{remark}{\bf Remark}
\newenvironment{Proof of lemma}{\noindent{\bf Proof of Lemma}}{\hfill$\Box$\newline}
\newenvironment{Proof of theorem}{\noindent{\bf Proof of Theorem}}{\hfill{\footnotesize${\square}$}\newline}
\newenvironment{Proof of theorems}{\noindent{\bf Proof of Theorems}}{\hfill$\Box$\newline}
\newenvironment{Proof of proposition}{\noindent{\bf Proof of Proposition}}{\hfill$\Box$\newline}
\newenvironment{Proof of propositions}{\noindent{\bf Proof of Propositions}}{\hfill$\Box$\newline}
\newenvironment{Proof of exercise}{\noindent{\it Proof of Exercise:}}{\hfill$\Box$}
\begin{document}
	
	\nocite{*} 
	
	\title{Parisi formula for the ground state energy\\ in the mixed $p$-spin model}
	
	\author{
				Antonio Auffinger\thanks{Department of Mathematics, Northwestern University. Email: auffing@math.northwestern.edu}
				\and
				Wei-Kuo Chen\thanks{School of Mathematics, University of Minnesota. Email: wkchen@umn.edu}
	%        	\and
	%        	Wenqing  Hu\thanks{School of Mathematics, University of Minnesota. Email: huxxx758@umn.edu}
	}
	\maketitle
	
	\begin{abstract}
		
		We show that the thermodynamic limit of the ground state energy in the mixed $p$-spin model can be identified as a variational problem. This gives a natural generalization of the Parisi formula at zero temperature. 
		
	\end{abstract}

\section{Introduction and main result}

The mixed $p$-spin model is defined on the hypercube $\Sigma_N:=\{-1,+1\}^N$ for $N\geq 1$ and its Hamiltonian is given by
$$
H_N(\sigma)=X_N(\sigma)+h\sum_{i=1}^N\sigma_i,
$$
where $X_N=(X_N(\sigma):\sigma\in \Sigma_N)$ is a centered Gaussian process indexed by $\Sigma_N,$
$$
X_N(\sigma)=\sum_{p\geq 2}\frac{c_p}{N^{(p-1)/2}}\sum_{1\leq i_1,\ldots,i_p\leq N}g_{i_1,\ldots,i_p}\sigma_{i_1}\cdots\sigma_{i_p}
$$
for i.i.d. standard Gaussian random variables $g_{i_1,\ldots,i_p}$ for $1\leq i_1,\ldots,i_p\leq N$ and $p\geq 2.$ Here, $h$ denotes the strength of the external field and the sequence $(c_p)_{p\geq 2}$ is assumed to decay fast enough, for instance, $\sum_{p\geq 2}2^pc_p^2<\infty$, to guarantee the infinite sum $X_N$ converges a.s. With this assumption, one readily computes that
$$
\e X_{N}(\sigma^1)X_N(\sigma^2)=N\xi(R_{1,2}),
$$
where 
$$
\xi(s)=\sum_{p\geq 2}c_p^2s^p
$$
and $R_{1,2}=N^{-1}\sum_{i=1}^N\sigma_i^1\sigma_i^2$ is the overlap between $\sigma^1$ and $\sigma^2.$ To avoid triviality, we shall assume that $c_p\neq 0$ for at least one $p\geq 2.$ The classical Sherrington-Kirkpatrick (SK) model corresponds to $\xi(s)=s^2/2.$

A quantity of great interest in the mixed $p$-spin model is the large $N$ limit (known as the thermodynamic limit) of the ground state energy 
$$
L_N:=\max_{\sigma\in\Sigma_N}\frac{H_N(\sigma)}{N}.
$$
In the past decades, there have been several numerical studies and analytic predictions of this limit in the physics literature, especially in the case of the SK model without external field $(h=0)$ \cite{CR02, KLL, OS1, OS2, Pankov}. This quantity also obtained great relevance in problems coming from computer science (see for instance \cite{DemboMontanariSen, MMbook} and the references therein).

 In order to get an explicit expression  for 
 $$ GSE:=\lim_{N\rightarrow\infty}L_N,$$ the  usual approach is to consider the free energy
\begin{align*}
F_N(\beta)=\frac{1}{\beta N}\log \sum_{\sigma\in\Sigma_N}\exp \beta H_N(\sigma),
\end{align*}
where $\beta>0$ is called the inverse temperature. It is well-known that the thermodynamic limit  of the free energy can be computed through the famous Parisi formula. More precisely, denote by $\mathcal{M}$ the collection of all probability distribution functions on $[0,1]$. Define the Parisi functional by
\begin{align*}
\mathcal{P}_{\beta}(\alpha)=\frac{\log 2}{\beta}+\Psi_{\alpha,\beta}(0,h)-\frac{1}{2}\int_0^1\beta\alpha(s)s\xi''(s)ds
\end{align*}
for $\alpha\in\mathcal{M}$, where $\Psi_{\alpha,\beta}(t,x)$ is the weak solution to the following nonlinear parabolic PDE,
\begin{align}\label{pde}
\partial_t\Psi_{\alpha,\beta}(t,x)&=-\frac{\xi''(t)}{2}\bigl(\partial_{xx}\Psi_{\alpha,\beta}(t,x)+\beta\alpha(t)(\partial_x\Psi_{\alpha,\beta}(t,x))^2\bigr)
\end{align} 
for $(t,x)\in[0,1)\times\mathbb{R}$ with boundary condition
$$
\Psi_{\alpha,\beta}(1,x)=\frac{\log \cosh \beta x}{\beta}.
$$
For results on the regularity of $\Psi_{\alpha,\beta}$, we refer the readers to \cite{AC13,JT2}. The Parisi formula says that 
\begin{align}\label{eq0}
F(\beta)=\lim_{N\rightarrow\infty}F_N(\beta)&=\inf_{\alpha\in\mathcal{M}}\mathcal{P}_\beta(\alpha) \,\,\,\,a.s.
\end{align}
Predicted by Parisi in \cite{Parisi}, this formula was  established by Talagrand \cite{Talagrand} in the case of the mixed even $p$-spin model, i.e., $c_p=0$ for all odd $p\geq 3$. Its validity to any mixed $p$-spin model was obtained by Panchenko \cite{Panchenko}. For fixed $\beta >0$, we denote the minimizer of \eqref{eq0} by $\alpha_{P,\beta}$. Uniqueness of this minimizer was established by Auffinger-Chen \cite{AC14}.

Letting $N$ and then $\beta$ tend to infinity, the simple bound
\begin{align*}
L_N\leq F_N(\beta)\leq L_N+\frac{\log 2}{\beta}
\end{align*}
 yields that 
$$
GSE=\lim_{\beta\rightarrow\infty}F(\beta)\,\,\,\,a.s.,
$$
from which the Parisi formula deduces 
\begin{align}\label{GS}
GSE=\lim_{\beta\rightarrow\infty}\inf_{\alpha\in\mathcal{M}}\mathcal{P}_\beta(\alpha)\,\,\,\,a.s.
\end{align} 

In this paper, we show that \eqref{GS} can be expressed as a variational problem. This gives a natural generalization of Parisi's formulation to the ground state energy. To prepare for the statements of our main results, we introduce the space $\mathcal{U}$ that collects all nonnegative and nondecreasing functions $\gamma$ on $[0,1)$ that are right continuous and satisfy $\int_0^1\gamma(t)dt<\infty$. We endow this space with the $L^1$-distance $d$. Let $\mathcal{U}_d$ be the set of all step-like $\gamma\in \mathcal U$, i.e., $\gamma$ is a piecewise constant function with finite jumps. For each $\gamma\in\mathcal{U}_d,$ consider the following fully nonlinear parabolic PDE 
	\begin{align}\label{pde3}
	\partial_t{\Psi}_{\gamma}(t,x)&=-\frac{\xi''(t)}{2}\Bigl(\partial_x^2{\Psi}_{\gamma}(t,x)+\gamma(t)\bigl(\partial_x{\Psi}_{\gamma}(t,x)\bigr)^2\Bigr),
	\end{align}
	for $(t,x)\in[0,1)\times\mathbb{R}$ with boundary condition 	 $${\Psi}_\gamma(1,x)=|x|.$$ Using the Cole-Hopf transformation, $\Psi_\gamma$ can be solved explicitly in the classical sense. As we will show below that $\gamma\in(\mathcal{U}_d,d)\mapsto\Psi_\gamma(t,x)$ defines a Lipschitz functional with uniform Lipschitz constant for all $(t,x)\in[0,1]\times\mathbb{R}$, one may extend $\Psi_{\gamma}$ uniquely and continuously to arbitrary $\gamma\in\mathcal{U}.$ We shall call $\Psi_\gamma$ the Parisi PDE solution at zero temperature throughout this paper. With this construction, we can now define a continuous functional $\mathcal{P}$ on $\mathcal{U}$ by
	\begin{align*}
	\mathcal{P}(\gamma)&=\Psi_{\gamma}(0,h)-\frac{1}{2}\int_0^1t\xi''(t)\gamma(t)dt.
	\end{align*}	
    Our main result is stated as follows.
	
\begin{theorem}[Parisi formula]
	\label{thm}
	We have that
	\begin{align}
	\label{thm:eq1}
	GSE&=\inf_{\gamma\in\mathcal{U}}\mathcal{P}(\gamma)\,\,\,\,a.s.
	\end{align}
\end{theorem}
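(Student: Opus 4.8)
The plan is to derive \eqref{thm:eq1} from the finite--temperature Parisi formula in the form \eqref{GS}, namely $GSE=\lim_{\beta\to\infty}\inf_{\alpha\in\mathcal{M}}\mathcal{P}_\beta(\alpha)$ a.s., by exhibiting $\mathcal{P}$ as the $\beta\to\infty$ limit of $\mathcal{P}_\beta$ under the change of variables $\gamma\leftrightarrow\beta\alpha$. The key observation is that if $\alpha\in\mathcal{M}$ and one sets $\gamma(t):=\beta\alpha(t)$ on $[0,1)$, then the coefficient $\beta\alpha(t)$ of the quadratic term in \eqref{pde} is exactly the coefficient $\gamma(t)$ in \eqref{pde3}, the two equations differ only in their terminal data at $t=1$ (namely $\beta^{-1}\log\cosh\beta x$ versus $|x|$), and the integral corrections in $\mathcal{P}_\beta(\alpha)$ and $\mathcal{P}(\gamma)$ coincide. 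I would first carry out the analytic preliminaries announced above: solve \eqref{pde3} in the classical sense for $\gamma\in\mathcal{U}_d$ via the Cole--Hopf transform, show that $\gamma\mapsto\Psi_\gamma(t,x)$ is Lipschitz in the distance $d$ with a constant uniform in $(t,x)\in[0,1]\times\mathbb{R}$ (so that $\Psi_\gamma$, and then $\mathcal{P}$, extend continuously from $\mathcal{U}_d$ to $\mathcal{U}$), and record the continuity of $\mathcal{P}$ on $\mathcal{U}$. I also note the elementary bound $|x|-\beta^{-1}\log 2\le\beta^{-1}\log\cosh\beta x\le|x|$ and two structural facts, valid for the solution operator of \eqref{pde3} and of \eqref{pde} with a fixed piecewise constant coefficient: it is order preserving in the terminal data (from the piecewise Cole--Hopf reduction to the order--preserving heat semigroup), and adding a constant to the terminal data adds the same constant to the solution (immediate, as the right side of \eqref{pde3} involves only $x$--derivatives of $\Psi$).

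For the bound $\inf_{\gamma\in\mathcal{U}}\mathcal{P}(\gamma)\le GSE$, fix a step--like $\alpha\in\mathcal{M}$ and any $\beta>0$, and put $\gamma:=\beta\alpha|_{[0,1)}\in\mathcal{U}_d$. As noted, $\Psi_\gamma$ and $\Psi_{\alpha,\beta}$ solve the same equation on $[0,1)\times\mathbb{R}$, while $\Psi_\gamma(1,x)=|x|\le\beta^{-1}\log\cosh\beta x+\beta^{-1}\log 2=\Psi_{\alpha,\beta}(1,x)+\beta^{-1}\log 2$; order preservation together with the constant--shift property gives $\Psi_\gamma(0,h)\le\Psi_{\alpha,\beta}(0,h)+\beta^{-1}\log 2$. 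Since the integral terms agree and the value of $\alpha$ at the single point $t=1$ affects neither the PDE on $[0,1)$ nor $\int_0^1$, this yields $\mathcal{P}(\gamma)\le\mathcal{P}_\beta(\alpha)$. Taking the infimum over step--like $\alpha\in\mathcal{M}$---which equals $\inf_{\alpha\in\mathcal{M}}\mathcal{P}_\beta(\alpha)$ by the $L^1$--continuity of $\mathcal{P}_\beta$ and the density of step functions in $\mathcal{M}$---and then letting $\beta\to\infty$ via \eqref{GS} gives $\inf_{\gamma\in\mathcal{U}_d}\mathcal{P}(\gamma)\le GSE$; the left side equals $\inf_{\gamma\in\mathcal{U}}\mathcal{P}(\gamma)$ by the continuity of $\mathcal{P}$ and the density of $\mathcal{U}_d$ in $\mathcal{U}$.

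For the reverse bound $GSE\le\inf_{\gamma\in\mathcal{U}}\mathcal{P}(\gamma)$, by the continuity of $\mathcal{P}$ and the density of $\mathcal{U}_d$ it suffices to prove $GSE\le\mathcal{P}(\gamma)$ for each $\gamma\in\mathcal{U}_d$. Fix such a $\gamma$, let $\beta>\sup\gamma$, and let $\alpha_\beta\in\mathcal{M}$ be the distribution function equal to $\gamma(t)/\beta$ for $t\in[0,1)$, with an atom of mass $1-\gamma(1^-)/\beta\ge 0$ at $t=1$. Then $\beta\alpha_\beta(t)=\gamma(t)$ on $[0,1)$, so $\Psi_{\alpha_\beta,\beta}$ and $\Psi_\gamma$ solve the same equation there, and $\Psi_{\alpha_\beta,\beta}(1,x)=\beta^{-1}\log\cosh\beta x\le|x|=\Psi_\gamma(1,x)$; order preservation gives $\Psi_{\alpha_\beta,\beta}(0,h)\le\Psi_\gamma(0,h)$, and since the integral terms agree, $\mathcal{P}_\beta(\alpha_\beta)\le\mathcal{P}(\gamma)+\beta^{-1}\log 2$. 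Combining with \eqref{GS}, $GSE=\lim_{\beta\to\infty}\inf_{\alpha\in\mathcal{M}}\mathcal{P}_\beta(\alpha)\le\liminf_{\beta\to\infty}\mathcal{P}_\beta(\alpha_\beta)\le\mathcal{P}(\gamma)$, and the ``a.s.'' is inherited from \eqref{GS}. The two inequalities together give \eqref{thm:eq1}.

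The main obstacle is not the soft comparison above but the deferred PDE analysis it relies on: that \eqref{pde3} is classically well posed for $\gamma\in\mathcal{U}_d$ through the Cole--Hopf transform, that $\gamma\mapsto\Psi_\gamma$ enjoys the \emph{uniform} Lipschitz bound $|\Psi_\gamma(t,x)-\Psi_{\gamma'}(t,x)|\le C\,d(\gamma,\gamma')$ which underlies the continuous extension to $\mathcal{U}$, and that the order and constant--shift properties persist at the level of the weak solutions of \eqref{pde}. A minor point, resolved by the density reductions above, is that the comparison is only ever run for piecewise constant coefficients, where both $\Psi_\gamma$ and $\Psi_{\alpha,\beta}$ are classical; the genuinely new ingredient beyond \eqref{GS} is merely the observation that the rescaling $\gamma=\beta\alpha$ matches the two nonlinearities while the two terminal conditions $|x|$ and $\beta^{-1}\log\cosh\beta x$ differ by the vanishing amount $\beta^{-1}\log 2$.
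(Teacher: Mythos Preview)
Your argument is correct, and for the inequality $GSE\le\inf_{\gamma}\mathcal{P}(\gamma)$ it is essentially the paper's Lemma~\ref{up}: both of you set $\alpha_\beta=\beta^{-1}\gamma$ on $[0,1)$ with an atom at $1$ and compare $\mathcal{P}_\beta(\alpha_\beta)$ with $\mathcal{P}(\gamma)$, the only cosmetic difference being that you invoke the comparison principle to get $\Psi_{\alpha_\beta,\beta}(0,h)\le\Psi_\gamma(0,h)$ directly, whereas the paper passes to the limit $\beta\to\infty$.

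For the other inequality, $\inf_{\gamma}\mathcal{P}(\gamma)\le GSE$, your route is genuinely different from, and more elementary than, the paper's. The paper works with the actual minimizers $\alpha_{P,\beta}$: it first proves the a~priori bound $\beta\alpha_{P,\beta}(s)\le C/(\xi(1)-\xi(s))$ via Gaussian integration by parts, extracts by Helly's theorem a weak limit $\gamma_0$ of $\beta\alpha_{P,\beta}$ on $[0,1)$ together with a limiting ``mass at $1$'' encoded by $L_0$, and then uses the stochastic optimal control representation of $\Psi_{\alpha,\beta}$ (Theorem~\ref{thm0}) to pass to the limit in $\mathcal{P}_\beta(\alpha_{P,\beta})$; the crux is a cancellation between the contribution of the atom at $1$ in the cost functional and in the linear correction term, which removes the singularity the paper emphasises in the introduction. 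Your proof sidesteps all of this: for \emph{every} step $\alpha$ and every $\beta$, with $\gamma:=\beta\alpha|_{[0,1)}$ the two PDEs coincide and the monotonicity/constant--shift properties of the Cole--Hopf iteration give $\Psi_\gamma(0,h)\le\Psi_{\alpha,\beta}(0,h)+\beta^{-1}\log 2$, hence $\mathcal{P}(\gamma)\le\mathcal{P}_\beta(\alpha)$; taking infima and letting $\beta\to\infty$ finishes. What your approach buys is a short, self-contained argument that never touches the optimizers $\alpha_{P,\beta}$, never needs the variational representation, and avoids the delicate limit at $t=1$. What the paper's approach buys in return is structural information you do not obtain: it exhibits an explicit candidate $\gamma_0=\lim_{\beta\to\infty}\beta\alpha_{P,\beta}$ with $GSE\ge\mathcal{P}(\gamma_0)$, linking zero--temperature optimizers to the low--temperature Parisi measures, and it develops the stochastic control formula (Theorem~\ref{thm0}), which is of independent interest.
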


We mention that the Parisi formula for the ground state energy in the spherical version of the mixed $p$-spin model has been established recently in Chen-Sen \cite{ArnabChen15} and Jagannath-Tobasco \cite{JT}. The approaches in both works rely on the Crisanti-Sommers representation for the thermodynamic limit of the free energy, where the functional has an explicit and simple expression in terms of $\xi$ and $\alpha.$ This representation is not available in our setting, which leads to a substantially more demanding problem that requires a different approach.% The Parisi functional $\mathcal{P}_\beta$ contains one term arising from a nonlinear PDE \eqref{pde} and one needs to develop a different approach.

The proof of Theorem \ref{thm} is based on the establishment of upper and lower inequalities between the two sides of \eqref{thm:eq1}. The upper bound is not difficult and has already been obtained by Guerra \cite[Theorem 6]{G} via choosing suitable candidates in $\mathcal{M}$ since one is taking infimum of the functional. The lower bound, in contrast, carries all the challenges as one would unavoidably need to handle the sequence $\mathcal{P}_{\beta}(\alpha_{P,\beta}),$ which involves the nonlinear PDE, %$\Psi_{\alpha_{P,\beta},\beta}$,
\begin{align}\label{eq1232}
\partial_t\Psi_{\alpha_{P,\beta},\beta}(t,x)&=-\frac{\xi''(t)}{2}\bigl(\partial_{xx}\Psi_{\alpha_{P,\beta},\beta}(t,x)+\beta\alpha_{P,\beta}(t)(\partial_x\Psi_{\alpha_{P,\beta},\beta}(t,x))^2\bigr).
\end{align} 
More precisely, it is known that $q_{P,\beta}:=\inf\{t\in[0,1]:\alpha_{P,\beta}(t)=1\}<1$ and one can solve the PDE in \eqref{eq1232} for $t\in [q_{P,\beta},1]$ to get
\begin{align*}
\Psi_{\alpha_{P,\beta},\beta}(t,x)&=\frac{1}{\beta}\log \cosh \beta x+\frac{\beta}{2}\bigl(\xi'(1)-\xi'(t)\bigr)
\end{align*} 
 (see \cite[Chapter 14]{T11}).
The major obstacle here is that we do not know the quantitative behavior of $\beta\alpha_{P,\beta}(t)$ for $t$ being close to $q_{P,\beta}$ from below, when $\beta$ tends to infinity. This makes it very hard to track the effect of this singularity by a direct analysis of the PDE solution. To overcome this issue, we construct a representation of the Parisi PDE in terms of the  stochastic optimal control problem introduced in \cite{AC14,BK}. Under this framework, we are able to deal with the large $\beta$ limit of the Parisi functional and we remove this singularity as a marvel cancellation happens between the non-linear PDE  and the linear term in the functional $\mathcal{P}_\beta$. The details are in Section \ref{sec3}, where we present the argument of obtaining the lower bound.  Although we only consider the mixed $p$-spin model in this paper, we believe that the present approach could also be useful in deriving similar results as Theorem \ref{thm} from the existing Parisi formulas for the free energies in other mean-field spin glass models \cite{P05,P15,P152,P153}. 

\begin{remark} \rm We comment that one may as well formulate the Parisi functional $\mathcal{P}$ by constructing the PDE solution $\Psi_\gamma$ directly from the equation \eqref{pde3} rather than using the  above approximation procedure. However, as $\gamma$ could tend to infinity when $t$ approaches $1$ from below and the boundary condition $|x|$ is not differentiable at $0$, the construction of the PDE solution and its regularity properties require extra effort. For this reason and clarity, we use the Lipschitz property  of the PDE  (see \eqref{eq2}) to construct the functional $\mathcal{P}$.
\end{remark}

\begin{remark} \rm
Determining uniqueness of the minimizer of \eqref{thm:eq1} needs regularity properties of the solution $\Psi_\gamma$ as those used in \cite{AC14}. The proof of uniqueness in \cite{AC14} carries through once these properties are established. We do not pursue this direction here.
\end{remark}

\smallskip
\smallskip
\smallskip

{\noindent \bf Acknowledgements.} W.-K. C. thanks Giorgio Parisi for valuable suggestions and Wenqing Hu for fruitful discussions at the early stage of this work. Both authors thank the 2016 emphasis year in probability at  Northwestern University, where this work was discussed. The research of A. A. is partly supported by NSF grant DMS-1597864. The research of W.-K. C. is partly supported by NSF grant DMS-1642207 and Hong Kong Research Grants Council GRF-14302515.

\section{Variational representation for the Parisi PDE}	

In this section, we will derive a variational representation for the PDE \eqref{pde} in the form of stochastic optimal control. This formulation appeared initially in \cite{BK} and was used to establish the strict convexity of the Parisi functional $\mathcal{P}_\beta$ in \cite{AC14}. See a simplified argument of \cite{AC14} in \cite{JT2}. Different than the derivations in \cite{AC14,BK,JT2}, here we present an approach that relies only on It\^{o}'s formula. Consider the following Parisi PDE,
\begin{align}\label{pde2}
\partial_t\Psi(t,x)&=-\frac{\xi''(t)}{2}\Bigl(\partial_{x}^2\Psi(t,x)+\eta(t)\bigl(\partial_x\Psi(t,x)\bigr)^2\Bigr)
\end{align}
for $(t,x)\in[0,1)\times\mathbb{R}$ with boundary condition $\Psi(1,x)=f(x)$, where $f$ and $\eta$ are specified by one of the following two cases
\begin{itemize}
	\item[$(A1)$] $f(x)=\beta^{-1}\log \cosh \beta x$ and $\eta=\beta \alpha$ for some $\beta>0$ and $\alpha\in\mathcal{M}$,
	\item[$(A2)$] $f(x)=|x|$ and $\eta\in\mathcal{U}_d.$
\end{itemize} 
It is known \cite{AC13,JT2} that given $(A1)$, the solution $\Psi$ has the properties that $\partial_x^j\Psi\in C([0,1]\times\mathbb{R})$ for all $j\geq 0$ and $|\partial_x\Psi|\leq 1.$
Likewise, since $\gamma\in\mathcal{U}_d$ is a step function, $\Psi$ can be solved via the Cole-Hopf transformation, from which it can be checked that $\partial_x^j\Psi\in C([0,1)\times\mathbb{R})$ for all $j\geq 0$ and $|\partial_x\Psi|\leq 1.$ 

Let $W=(W_t)_{0\leq t\leq 1}$ be a standard Brownian motion. For $0\leq s<t\leq 1$, denote by $D[s,t]$ the collection of all progressive measurable processes $u$ on $[0,1]$ with respect to the filtration generated by $W$ and satisfying $\sup_{0\leq t\leq 1}|u(t)|\leq 1.$ For any $x\in\mathbb{R}$ and $u\in D[s,t],$ define
\begin{align*}
F^{s,t}(u,x)=\e\left[C^{s,t}(u,x)-L^{s,t}(u)\right],
\end{align*}
where
\begin{align*}
C^{s,t}(u,x)&=\Psi\left(t,x+\int_s^t\eta(r)\xi''(r)u(r)dr+\int_s^t\xi''(r)^{1/2}dW_r\right),\\
L^{s,t}(u)&=\frac{1}{2}\int_s^t\eta(r)\xi''(r)\e u(r)^2dr.
\end{align*} 
Note that these functionals are well-defined as $\int_{0}^{1}\eta (r) dr <\infty$ and $|u(r)| \leq 1$ for all $r \in [0,1]$.

\begin{theorem}[Variational formula] \label{thm0}
	Let $f$ and $\eta$ satisfy $(A1)$ or $(A2).$  
We have that 
        \begin{equation*}
		\Psi(s,x)=\max\left\{F^{s,t}(u,x)|u\in D[s,t]\right\}.
		\end{equation*}

\end{theorem}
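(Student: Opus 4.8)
The plan is to obtain both the inequality $\Psi(s,x)\ge F^{s,t}(u,x)$ for every $u\in D[s,t]$ and its attainment from a single application of It\^o's formula, using only the PDE \eqref{pde2}, the bound $|\partial_x\Psi|\le 1$, and the elementary identity $ap-\tfrac12p^2=\tfrac12a^2-\tfrac12(a-p)^2$. Fix $0\le s<t\le 1$, $x\in\mathbb{R}$ and $u\in D[s,t]$, and set
\[
Y_r=x+\int_s^r\eta(\theta)\xi''(\theta)u(\theta)\,d\theta+\int_s^r\xi''(\theta)^{1/2}\,dW_\theta,\qquad s\le r\le t,
\]
so that $\langle Y\rangle_r=\int_s^r\xi''(\theta)\,d\theta$ and $C^{s,t}(u,x)=\Psi(t,Y_t)$.

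\textbf{The inequality $\Psi(s,x)\ge F^{s,t}(u,x)$.} Assume first $t<1$. The regularity recalled from \cite{AC13,JT2} ($\partial_x^j\Psi\in C$, $|\partial_x\Psi|\le 1$) together with \eqref{pde2} makes $\Psi$ absolutely continuous in $t$ and $C^2$ in $x$, so It\^o's formula (localizing in $x$ if needed) gives
\[
d\Psi(r,Y_r)=\Bigl(\partial_t\Psi+\eta(r)\xi''(r)u(r)\,\partial_x\Psi+\tfrac12\xi''(r)\,\partial_x^2\Psi\Bigr)dr+\xi''(r)^{1/2}\,\partial_x\Psi\,dW_r,
\]
all partials at $(r,Y_r)$. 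Substituting \eqref{pde2} to cancel $\partial_t\Psi+\tfrac12\xi''\partial_x^2\Psi=-\tfrac12\xi''\eta(\partial_x\Psi)^2$, the drift becomes $\eta(r)\xi''(r)\bigl(u(r)\partial_x\Psi-\tfrac12(\partial_x\Psi)^2\bigr)dr\le\tfrac12\eta(r)\xi''(r)u(r)^2\,dr$. Integrating over $[s,t]$ and taking expectations — the It\^o integral is a true martingale since $|\partial_x\Psi|\le 1$ and $\xi''$ is bounded on $[0,1]$, while the drift is dominated by $\tfrac32\eta(r)\xi''(r)$ with $\int_0^1\eta<\infty$ — we get $\mathbb{E}\,\Psi(t,Y_t)-\Psi(s,x)\le L^{s,t}(u)$, i.e.\ $\Psi(s,x)\ge F^{s,t}(u,x)$.

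\textbf{Attainment.} Let $Y^*$ solve $dY^*_r=\eta(r)\xi''(r)\partial_x\Psi(r,Y^*_r)\,dr+\xi''(r)^{1/2}\,dW_r$ with $Y^*_s=x$, and set $u^*(r)=\partial_x\Psi(r,Y^*_r)$. Since $\partial_x\Psi$ is locally Lipschitz in $x$ (again by \cite{AC13,JT2}) and the drift of $Y^*$ is dominated by the integrable function $\eta\xi''$, this SDE has a unique strong solution on $[s,t]$, and $|\partial_x\Psi|\le 1$ forces $u^*\in D[s,t]$. For this choice $u^*\partial_x\Psi-\tfrac12(\partial_x\Psi)^2=\tfrac12(u^*)^2$ at every $r$, so the computation above becomes an identity and yields $\Psi(s,x)=F^{s,t}(u^*,x)$. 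Combined with the previous step this gives $\Psi(s,x)=\max\{F^{s,t}(u,x):u\in D[s,t]\}$ for $t<1$; under $(A1)$ it holds verbatim at $t=1$ as well, since there $\partial_x^j\Psi\in C([0,1]\times\mathbb{R})$.

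\textbf{The case $t=1$ under $(A2)$, and the main obstacle.} Here $\Psi$ is only known to be smooth on the \emph{open} strip $[0,1)\times\mathbb{R}$ — the terminal datum $|x|$ is not differentiable at the origin, so $\Psi$ fails to be $C^1$ up to $t=1$ — and I would handle this by proving the identity for all $t<1$ and letting $t\uparrow 1$: $Y_t\to Y_1$ a.s.\ and in $L^2$, $\Psi$ extends continuously to $\{1\}\times\mathbb{R}$ with $|\Psi(t,y)|\le|\Psi(t,0)|+|y|$, and $\eta\xi''\in L^1([0,1])$ with $|u|\le 1$; hence $F^{s,t}(u,x)\to F^{s,1}(u,x)$ by dominated convergence, which transfers both the inequality and the attainment to $t=1$ (the optimizer need only be specified on $[s,1)$). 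The It\^o computation and the integrability bookkeeping are routine; the genuinely delicate ingredient — and the one place where the regularity of $\Psi$ cited above is indispensable — is making the optimal control $u^*$ rigorous, i.e.\ solving the SDE for $Y^*$ with drift $\eta\,\xi''\,\partial_x\Psi$, and I expect this to be the main point requiring care.
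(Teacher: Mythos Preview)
Your proposal is correct and follows essentially the same route as the paper: apply It\^o's formula to $\Psi(r,Y_r)$, substitute the PDE, complete the square in the drift via $u\,\partial_x\Psi-\tfrac12(\partial_x\Psi)^2=\tfrac12 u^2-\tfrac12(\partial_x\Psi-u)^2$ to get the inequality with an explicit nonnegative defect, and then take $u^*(r)=\partial_x\Psi(r,X(r))$ along the feedback SDE for attainment. The only cosmetic differences are that the paper packages the computation as $dZ\equiv 0$ for a suitably defined process $Z$ (recording the exact identity \eqref{lem:proof:eq1} rather than the inequality), reduces first to continuous $\eta$ before appealing to approximation, and in case $(A2)$ with $t=1$ simply extends $u^*$ by $u^*(1)=0$ instead of your $t\uparrow 1$ limit; none of this changes the substance.
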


\begin{proof}
	For simplicity, we shall only consider the case that $\eta$ is continuous on $[0,1]$. Under this assumption, the PDE \eqref{pde2} is valid in the classical sense and this allows us to use It\^{o}'s formula. The general case can be treated by an approximation argument identical to \cite[Theorem 3]{AC14}.
	Let $u\in D[s,t].$ For notational convenience, we denote $$Y(r)=x+\int_s^r\eta(w)\xi''(w)u(w)dw+\int_s^r\xi''(w)^{1/2}dW_w.$$
	Define 
	\begin{align*}
	Z(r)&=\Psi(r,Y(r))+\frac{1}{2}\int_s^r\eta(w)\xi''(w)\bigl(\partial_x\Psi(w,Y(w))-u(w)\bigr)^2dw\\
	&-\int_s^r\xi''(w)^{1/2}\partial_x\Psi(w,Y(w))dW_w-\frac{1}{2}\int_s^r\eta(w)\xi''(w)u(w)^2dw.
	\end{align*} 
	Using It\^{o}'s formula, we obtain that
	\begin{align*}
	&\Psi(w,Y(w))=\partial_w\Psi(w,Y(w))dw+\partial_x\Psi(w,Y(w))dY(w)+\frac{1}{2}\partial_{xx}\Psi(w,Y(w))d\la Y, Y \ra_w.
	\end{align*}
	Here, from the PDE \eqref{pde2}, the right-hand side becomes
	\begin{align*}
	&-\frac{\xi''}{2}\bigl(\partial_w^2\Psi(w,Y)+\eta(w)\bigl(\partial_x\Psi(w,Y)\bigr)^2\bigr)\\
	&+\eta \xi'' u \partial_x\Psi(w,Y )dw+{\xi''}^{1/2}\partial_x\Psi(w,Y )dW_w+\frac{\xi''}{2}\partial_{xx}\Psi(w,Y )dw\\
	&=-\frac{1}{2}\eta \xi'' \bigl(\bigl(\partial_x\Psi(w,Y )\bigr)^2-2u \partial_x\Psi(w,Y )\bigr)dw+{\xi''}^{1/2}\partial_x\Psi(w,Y )dW_w\\
	&=-\frac{1}{2}\eta \xi'' \bigl(\partial_x\Psi(w,Y )-u \bigr)^2dw+{\xi''}^{1/2}\partial_x\Psi(w,Y )dW_w+\frac{1}{2}\eta \xi'' u^2dw.
	\end{align*}
	In other words, $dZ(r)=0$ and this implies that $Z(t)=Z(s)=\Psi(s,x)$. Taking expectation of this equation gives 
	\begin{align}
	\begin{split}\label{lem:proof:eq1}
	\Psi(s,x)&=\e \Psi\big(t,Y(t)\bigr)-\frac{1}{2}\int_s^t\eta(r)\xi''(r)\e u(r)^2dr\\
	&+\frac{1}{2}\int_s^t\eta(r)\xi''(r) \e\bigl(\partial_x\Psi(r,Y(r))-u(r)\bigr)^2dr,
	\end{split}
	\end{align}
	so
	\begin{align}
	\begin{split}\label{lem:proof:eq2}
	\Psi(s,x)&\geq \sup_{u\in D[s,t]}\Bigl(\e \Psi\Big(t,x+\int_s^t\eta(r)\xi''(r)u(r)dr+\int_s^t\xi''(r)^{1/2}dW_r\Bigr)\\
	&\qquad\qquad\qquad\qquad\qquad-\frac{1}{2}\int_s^t\eta(r)\xi''(r)\e u(r)^2dr\Bigr).
	\end{split}
	\end{align}
To obtain the optimality, in the case of either $(A1)$ or $(A2)$ with $t<1$, we consider 
     \begin{align}\label{max}
     u^*(r)&=\partial_x\Psi(r,X(r)),
     \end{align}
     where $(X(r))_{s\leq r\leq t}$ is the strong solution to
     \begin{align*}
     dX(r)&=\eta(r)\xi''(r)\partial_x\Psi(r,X(r))dr+\xi''(r)^{1/2}dW(r),\\
     X(s)&=x.
     \end{align*}
Simply notice that if $u=u^*$, then $Y(r)=X(r)$  for $s\leq r\leq t$ such that $\partial_x\Psi(r,Y(r))=u^*$ for all $s\leq r\leq t$. From this and  \eqref{lem:proof:eq1}, the equality of \eqref{lem:proof:eq2} follows. For the case $(A2)$ with $t=1$, we take $u^*$ to be the same as \eqref{max} for $s\leq r<1$ and set $u^*(1)=0$. Letting $u=u^*$, one sees that $Y(r)=X(r)$ for $s\leq r<1$ such that $\partial_x\Psi(r,Y(r))=u^*$ for all $s\leq r <1$. The optimality remains true. Note that the reason why one could not take the same $u^*$ directly from \eqref{max} on the whole $[s,1]$ is because $\Psi(1,x)=|x|$ is not differentiable at $0$. 
\end{proof}

\begin{remark}
	\rm In \cite{AC14,BK,JT2}, the fact that the process $u^*$ attains the maximum value of $F^{s,t}(\cdot,x)$ was established by a direct verification using It\^{o}'s formula. From the above proof, the equation \eqref{lem:proof:eq1} quantifies the distance between the PDE and the functional $F^{s,t}$ for any $u.$ Furthermore, it also explains how to choose the right candidate to reach the optimality.
\end{remark}

As an immediate consequence of Theorem \ref{thm0},  we obtain the variational representations for the PDE's $\Psi_{\alpha,\beta}$ for $\alpha\in\mathcal{M}$ and $\Psi_\gamma$ for $\gamma\in\mathcal{U}_d.$

\begin{corollary}
For any $\alpha\in\mathcal{M}$, we have that
\begin{align}
\begin{split}\label{cor1:eq1}
\Psi_{\alpha,\beta}(0,x)&=\sup_{u\in D[0,1]}\Bigl(\frac{1}{\beta}\e \log \cosh \beta\Bigl(x+\int_{0}^1 u(r)\xi''(r)\beta\alpha(r)dr+\int_0^1\xi''(r)^{1/2}dW_r\Bigr)\\
&\qquad\qquad\qquad-\frac{1}{2}\int_{0}^1\e  u(r)^2\xi''(r)\beta\alpha(r)dr\Bigr).
\end{split}
\end{align}
\end{corollary}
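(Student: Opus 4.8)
The corollary is essentially Theorem~\ref{thm0} applied to the specific case $(A1)$ with boundary data $f(x)=\beta^{-1}\log\cosh\beta x$ and $\eta=\beta\alpha$, so the plan is to unwind the definitions and match notation. First I would recall that for $\alpha\in\mathcal{M}$, the PDE solution $\Psi_{\alpha,\beta}$ from \eqref{pde} is exactly the solution $\Psi$ of \eqref{pde2} under assumption $(A1)$ with this choice of $f$ and $\eta$; this is immediate by comparing the two equations and their boundary conditions. Hence Theorem~\ref{thm0} with $s=0$, $t=1$, and $x$ arbitrary gives
\begin{align*}
\Psi_{\alpha,\beta}(0,x)=\max\left\{F^{0,1}(u,x)\mid u\in D[0,1]\right\}.
\end{align*}

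Next I would substitute the explicit forms of $C^{0,1}$ and $L^{0,1}$ from the definition of $F^{s,t}$. With $\eta(r)=\beta\alpha(r)$ we have
\begin{align*}
C^{0,1}(u,x)&=\Psi_{\alpha,\beta}\Bigl(1,\,x+\int_0^1\beta\alpha(r)\xi''(r)u(r)\,dr+\int_0^1\xi''(r)^{1/2}\,dW_r\Bigr),\\
L^{0,1}(u)&=\frac{1}{2}\int_0^1\beta\alpha(r)\xi''(r)\,\e\, u(r)^2\,dr,
\end{align*}
and then invoke the boundary condition $\Psi_{\alpha,\beta}(1,y)=\beta^{-1}\log\cosh\beta y$ to replace the outer $\Psi_{\alpha,\beta}(1,\cdot)$ by $\beta^{-1}\log\cosh\beta(\cdot)$. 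Pulling the expectation through (it already sits outside in $F^{s,t}$, and $L^{0,1}$ is deterministic once the $\e u(r)^2$ is taken) and reorganizing the constants $\beta\alpha(r)\xi''(r)$ into the order displayed in \eqref{cor1:eq1} yields exactly the claimed expression, with the $\max$ replaced by $\sup$ (harmless, since a sup over a set where the max is attained equals that max).

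There is essentially no obstacle here: the only point requiring a word of care is that Theorem~\ref{thm0} is stated for general $0\le s<t\le 1$ and one must simply specialize to $s=0$, $t=1$, which is permitted since case $(A1)$ imposes no restriction $t<1$ (the boundary function $\beta^{-1}\log\cosh\beta x$ is smooth, unlike $|x|$). I would also note in passing that the supremum is in fact attained, by the optimal control $u^*(r)=\partial_x\Psi_{\alpha,\beta}(r,X(r))$ constructed in the proof of Theorem~\ref{thm0}, although the corollary as stated only needs the identity as a supremum. This completes the derivation.
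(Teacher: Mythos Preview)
Your proof is correct and matches the paper's approach exactly: the paper presents this corollary as an immediate consequence of Theorem~\ref{thm0} without giving a separate proof, and your derivation---specializing to case $(A1)$ with $s=0$, $t=1$ and substituting the boundary condition $\Psi_{\alpha,\beta}(1,y)=\beta^{-1}\log\cosh\beta y$---is precisely the intended unwinding of definitions.
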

\begin{corollary}\label{cor2}
	Let $s\in[0,1]$ and $x\in\mathbb{R}.$ For any $\gamma,\gamma'\in\mathcal{U}_d$, we have that
		\begin{align}
		\begin{split}\label{eq1}
		\Psi_{\gamma}(s,x)&=\sup_{u\in D[s,1]}\Bigl(\e \Bigl|x+\int_{s}^1 u(s)\xi''(r)\gamma(r)dr+\int_s^1\xi''(r)^{1/2}dW_r\Bigr|\\
		&\qquad\qquad\qquad-\frac{1}{2}\int_{s}^1\e  u(r)^2\xi''(r)\gamma(r)dr\Bigr)
		\end{split}
		\end{align}
		and
		\begin{align}
		\label{eq2}
				|\Psi_\gamma(s,x)-\Psi_{\gamma'}(s,x)|\leq 2\xi''(1)d(\gamma,\gamma').
		\end{align}
\end{corollary}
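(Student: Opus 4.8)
The plan is to deduce both assertions from Theorem~\ref{thm0}, which applies directly in case $(A2)$.

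For \eqref{eq1}, I would simply invoke Theorem~\ref{thm0} with $f(x)=|x|$, $\eta=\gamma\in\mathcal{U}_d$, and $t=1$. Unwinding the definitions of $F^{s,1}$, $C^{s,1}$, $L^{s,1}$ and using the boundary value $\Psi_\gamma(1,\cdot)=|\cdot|$, the identity $\Psi_\gamma(s,x)=\max\{F^{s,1}(u,x)\mid u\in D[s,1]\}$ is verbatim the right-hand side of \eqref{eq1}. When $s=1$ both integrals are empty and the two sides equal $|x|$, so there is nothing to prove; below I take $s<1$, where Theorem~\ref{thm0} is in force.

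For \eqref{eq2}, the plan is to compare the two variational problems \eqref{eq1} for $\gamma$ and for $\gamma'$ term by term in $u$. Write $\Phi_\rho(u)$ for the objective functional on the right of \eqref{eq1} associated with $\rho\in\{\gamma,\gamma'\}$, so that $\Psi_\rho(s,x)=\sup_{u\in D[s,1]}\Phi_\rho(u)$, and note both suprema are finite. By the elementary bound $|\sup_u\Phi_\gamma(u)-\sup_u\Phi_{\gamma'}(u)|\le\sup_u|\Phi_\gamma(u)-\Phi_{\gamma'}(u)|$, it suffices to estimate $|\Phi_\gamma(u)-\Phi_{\gamma'}(u)|$ uniformly in $u$. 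Setting $B:=\int_s^1\xi''(r)^{1/2}\,dW_r$ and $A_\rho:=\int_s^1 u(r)\xi''(r)\rho(r)\,dr$, I split this difference into the absolute-value part $\e\bigl[|x+A_\gamma+B|-|x+A_{\gamma'}+B|\bigr]$ and the Lagrangian part $-\tfrac12\int_s^1\xi''(r)(\gamma(r)-\gamma'(r))\,\e u(r)^2\,dr$. For the first the reverse triangle inequality $\bigl||a|-|b|\bigr|\le|a-b|$ gives the bound $\e|A_\gamma-A_{\gamma'}|\le\int_s^1|u(r)|\,\xi''(r)\,|\gamma(r)-\gamma'(r)|\,dr$; for the second I pull the absolute value inside the integral. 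In both I then use $|u(r)|\le1$ (hence $\e u(r)^2\le1$) and $\xi''(r)\le\xi''(1)$ for $r\in[0,1]$ — the latter by termwise comparison, since $\xi''(r)=\sum_{p\ge2}p(p-1)c_p^2 r^{p-2}$ with $\xi''(1)=\sum_{p\ge2}p(p-1)c_p^2<\infty$ — together with $\int_s^1|\gamma(r)-\gamma'(r)|\,dr\le d(\gamma,\gamma')$. Adding the two contributions yields $|\Phi_\gamma(u)-\Phi_{\gamma'}(u)|\le\bigl(1+\tfrac12\bigr)\xi''(1)\,d(\gamma,\gamma')\le 2\,\xi''(1)\,d(\gamma,\gamma')$ uniformly in $u$, which is \eqref{eq2}.

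I do not anticipate a genuine obstacle here: the corollary is an essentially immediate consequence of Theorem~\ref{thm0}, and the only points requiring a little care are the degenerate endpoint $s=1$, the finiteness and termwise bound on $\xi''$, and the routine $|\sup-\sup|$ inequality. What does deserve emphasis is that the constant $2\xi''(1)$ in \eqref{eq2} is \emph{independent of $(s,x)$}; this uniformity is precisely what licenses the continuous extension of $\gamma\mapsto\Psi_\gamma(t,x)$ from $(\mathcal{U}_d,d)$ to all of $(\mathcal{U},d)$ announced before Theorem~\ref{thm}, and hence the well-definedness of the functional $\mathcal{P}$.
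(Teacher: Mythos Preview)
Your proposal is correct and follows essentially the same approach as the paper: \eqref{eq1} is a direct instantiation of Theorem~\ref{thm0} in case $(A2)$ with $t=1$, and \eqref{eq2} is obtained from the variational formula by the 1-Lipschitz property of $|\cdot|$ (your reverse triangle inequality) together with the uniform bound $|u|\le 1$. Your write-up is more explicit than the paper's one-line sketch and even shows that the constant $\tfrac32\,\xi''(1)$ suffices before relaxing to $2\,\xi''(1)$.
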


Here, \eqref{eq2} can be checked directly from \eqref{eq1} by noting that $|x|$ is Lipschitz $1$ and using the triangle inequality. With the Lipschitz property \eqref{eq2}, one can now extend the solution $\Psi_\gamma$ continuously and uniquely to all $\gamma\in\mathcal{U}$ by an approximation procedure using $\mathcal{U}_d.$ It is then clear that both \eqref{eq1} and \eqref{eq2} are valid for any $\gamma\in\mathcal{U}.$

\section{Proof of Parisi's formula}\label{sec3}

First we construct a weakly convergent subsequence of $(\beta \alpha_{P,\beta})_{\beta>0}$ as follows. Note that from Gaussian integration by parts, one has the following identity,
\begin{align}\label{sec3.1:eq1}
\beta\Bigl(\xi(1)-\e\bla \xi(R_{1,2})\bra_\beta\Bigr)=\e \Bla\frac{X_N(\sigma)}{N}\Bra_\beta,
\end{align}
where letting 
\begin{align*}
G_{N,\beta}(\sigma)=\frac{\exp \beta H_N(\sigma)}{\sum_{\sigma\in\Sigma_N}\exp \beta H_N(\sigma)}
\end{align*}
be the Gibbs measure, $\sigma^1,\sigma^2$ are two i.i.d. samplings from $G_{N,\beta}$ and $\la \cdot\ra_\beta$ is the Gibbs average with respect to this measure.
To control \eqref{sec3.1:eq1}, we observe that
\begin{align*}
\e \Bla\frac{X_N(\sigma)}{N}\Bra_\beta&\leq \e\max_{\sigma\in \Sigma_N}\frac{X_N(\sigma)}{N}\leq \sqrt{2\xi'(1)\log 2}.
\end{align*}
Here the second inequality is obtained by using the usual estimate for the size of the maximum of Gaussian process (see e.g.  \cite{T03}). It is well-known (see  \cite{P08}) that $\beta F(\beta)$ is differentiable in temperature, which yields
\begin{align*}
\lim_{N\rightarrow\infty}\e\bla \xi(R_{1,2})\bra_\beta&=\int_0^1\xi(s)\alpha_{P,\beta}(ds).
\end{align*}
Consequently, from \eqref{sec3.1:eq1} and integration by part,
\begin{align*}
\int_0^1\beta \alpha_{P,\beta}(s)\xi'(s)ds=\beta\Bigl(\xi(1)-\int_0^1\xi(s)\alpha_{P,\beta}(ds)\Bigr)\leq \sqrt{2\xi'(1)\log 2}.
\end{align*}
From this inequality, since $\alpha_{P,\beta}$ is nondecreasing, it follows that
\begin{align}\label{eq-1}
\beta \alpha_{P,\beta}(s)\leq \frac{\sqrt{2\xi'(1)\log 2}}{\xi(1)-\xi(s)},\,\,\forall s\in[0,1).
\end{align}
%and
%\begin{align}\label{eq-10}
%\beta(\xi(1)-\xi(q_{P,\beta}))\leq \sqrt{2\xi'(1)\log 2}, 
%\end{align}
%where $q_{P,\beta}$ is the smallest number such that $\alpha_{P,\beta}(q_{P,\beta})=1$.
From the last two inequalities, we may use Helly's selection theorem combined with a diagonalization process to conclude that, without loss of generality, 
\begin{align}\label{eq-3}
\gamma_0:=\lim_{\beta\rightarrow\infty}\beta \alpha_{P,\beta}
\end{align}  
exists weakly on $[0,1)$ and 
\begin{align}
\label{eq-4} 
L_0:=\lim_{\beta\rightarrow\infty}\int_0^1\beta\alpha_{P,\beta}(s)\xi''(s)ds
\end{align}
exists. The following lemma, though simple, will be of great use in our argument.

\begin{lemma}\label{lem3}
	Let $(\alpha_\beta)_{\beta>0}\in\mathcal{M}$ such that $(\beta\alpha_\beta)$ converges to $\gamma  $ weakly on $[0,1)$ for some $\gamma\in\mathcal{U}$ and $$
	\int_0^1\beta\alpha_\beta(s)\xi''(s)ds\rightarrow L.$$ If $\phi$ is any measurable function with $\|\phi\|_\infty\leq 1$ and $\lim_{t\rightarrow 1-}\phi(t)=\phi(1)$ a.s., then
	\begin{align*}
	\lim_{\beta\rightarrow\infty}\int_0  ^1\beta\alpha_\beta(s)\xi''(s)\phi(s)ds&=\int_0  ^1\xi''(s)\phi(s)\nu  (ds),
	\end{align*}  
	where $\nu  $ is the measure induced by $$
	\nu  (ds)=1_{[0,1)}(s)\gamma  (s)ds+\frac{1}{\xi''(1)}\Bigl(L  -\int_0  ^1\gamma  (s)\xi''(s)ds\Bigr)\delta_1(ds)
	$$
	for $\delta_1(ds)$ the Dirac measure at $1.$
\end{lemma}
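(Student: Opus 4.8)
The plan is to split the integral at a point $1-\delta$ close to $1$, treat the bulk $[0,1-\delta]$ by dominated convergence, control the tail $[1-\delta,1]$ using only the hypothesis $\int_0^1\beta\alpha_\beta\xi''\to L$ and the left-continuity of $\phi$ at $1$, and then let $\delta\downarrow 0$. The one structural fact I would record first is that $\beta\alpha_\beta$ is nondecreasing on $[0,1)$: for monotone functions, weak convergence to the (automatically nondecreasing) limit $\gamma$ is the same as pointwise convergence at every continuity point of $\gamma$, so $\beta\alpha_\beta(s)\to\gamma(s)$ for Lebesgue-a.e.\ $s\in[0,1)$. Moreover, for every continuity point $a\in(0,1)$ of $\gamma$ we have $\sup_\beta\beta\alpha_\beta(a)<\infty$, so by monotonicity the functions $\beta\alpha_\beta$ are, for all large $\beta$, bounded on $[0,a]$ by a constant $M_a$.

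Now fix a continuity point $a=1-\delta$ of $\gamma$. Since $\xi''$ is continuous and nonnegative on $[0,1]$ and $\|\phi\|_\infty\le 1$, on $[0,1-\delta]$ the integrands $\beta\alpha_\beta(s)\xi''(s)\phi(s)$ are dominated by the constant $M_{a}\,\xi''(1)$, so
\[
\int_0^{1-\delta}\beta\alpha_\beta(s)\xi''(s)\phi(s)\,ds\;\xrightarrow[\beta\to\infty]{}\;\int_0^{1-\delta}\gamma(s)\xi''(s)\phi(s)\,ds ,
\]
and applying this with $\phi\equiv 1$ together with $\int_0^1\beta\alpha_\beta\xi''\to L$ gives also $\int_{1-\delta}^1\beta\alpha_\beta\xi''\to L-\int_0^{1-\delta}\gamma\xi''$. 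For the tail I would set $\omega(\delta):=\sup_{1-\delta<s<1}|\phi(s)-\phi(1)|$, which tends to $0$ as $\delta\downarrow 0$, and use $\beta\alpha_\beta\xi''\ge 0$ to obtain
\[
\Bigl|\int_{1-\delta}^1\beta\alpha_\beta\xi''\phi-\phi(1)\int_{1-\delta}^1\beta\alpha_\beta\xi''\Bigr|\le\omega(\delta)\int_{1-\delta}^1\beta\alpha_\beta\xi''\le\omega(\delta)\int_0^1\beta\alpha_\beta\xi'' .
\]
Combining the bulk and tail estimates and letting $\beta\to\infty$ yields $\limsup_\beta\bigl|\int_0^1\beta\alpha_\beta\xi''\phi-A_\delta\bigr|\le\omega(\delta)L$, with $A_\delta:=\int_0^{1-\delta}\gamma\xi''\phi+\phi(1)\bigl(L-\int_0^{1-\delta}\gamma\xi''\bigr)$. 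Finally, sending $\delta\downarrow 0$ along continuity points of $\gamma$ — legitimate because $\gamma\xi''$ is integrable on $[0,1]$ (since $\int_0^1\gamma<\infty$ and $\xi''$ is bounded) — one gets $A_\delta\to\int_0^1\gamma\xi''\phi+\phi(1)\bigl(L-\int_0^1\gamma\xi''\bigr)$ while $\omega(\delta)L\to 0$; this limit is exactly $\int_0^1\xi''(s)\phi(s)\nu(ds)$, since the absolutely continuous part of $\nu$ contributes $\int_0^1\gamma\xi''\phi$ and the atom at $1$ contributes $\xi''(1)\phi(1)\cdot\tfrac{1}{\xi''(1)}\bigl(L-\int_0^1\gamma\xi''\bigr)$.

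The genuinely delicate point — and the reason for the particular hypotheses — is the tail: weak convergence of $\beta\alpha_\beta$ is only asserted on $[0,1)$, so it says nothing about where the ``excess mass'' $L-\int_0^1\gamma(s)\xi''(s)\,ds$ (nonnegative, by Fatou applied to $\beta\alpha_\beta\xi''\ge 0$) ends up. It is precisely the convergence $\int_0^1\beta\alpha_\beta\xi''\to L$ that forces this mass to accumulate, in the limit, at the single endpoint $s=1$, and the left-continuity of $\phi$ at $1$ is exactly what makes it visible in $\int\beta\alpha_\beta\xi''\phi$ with the correct weight $\phi(1)$; this is the mechanism producing the Dirac term in $\nu$. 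Everything else is a routine interchange of limits, the only bookkeeping being to choose the cutoff $1-\delta$ among the co-countably many continuity points of $\gamma$ so that monotonicity supplies the uniform bound $M_a$ needed for dominated convergence.
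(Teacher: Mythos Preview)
Your proof is correct and follows essentially the same strategy as the paper: split the integral at a cutoff $t=1-\delta$, handle the bulk $[0,t]$ by dominated convergence and the tail $[t,1]$ via the left-continuity of $\phi$ at $1$, then send $t\to 1^{-}$. The one minor difference is that you derive the uniform bound on $\beta\alpha_\beta$ over $[0,t]$ directly from the lemma's hypotheses (monotonicity together with pointwise convergence at continuity points of $\gamma$), whereas the paper's proof invokes the a~priori estimate~\eqref{eq-1}, which strictly speaking was established only for the Parisi minimizers $\alpha_{P,\beta}$; in this respect your argument is more self-contained for the lemma as stated.
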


\begin{proof}
	Write
	\begin{align*}
	\int_0^1\beta\alpha_\beta(s)\xi''(s) \phi(s)ds&=\int_0^t\beta\alpha_\beta(s)\xi''(s) \phi(s)ds+\int_t^1\beta\alpha_\beta(s)\xi''(s) \phi(s)ds.
	\end{align*}
	Observe that
	\begin{align*}
	&\Bigl|\int_t^1\beta\alpha_\beta(s)\xi''(s) \phi(s)ds- \phi(1)\Bigl(\int_0^1\beta\alpha_\beta(s)\xi''(s)ds-\int_0^t\beta\alpha_\beta(s)\xi''(s)ds\Bigr)\Bigr|\\
	&=\Bigl|\int_t^1\beta\alpha_\beta(s)\xi''(s)( \phi(s)- \phi(1))ds\Bigr|\\
	&\leq \int_t  ^1\beta\alpha_\beta(s)\xi''(s)ds\max_{t\leq s\leq 1}| \phi(s)- \phi(1)|.
	\end{align*}
	From this, it follows that by \eqref{eq-1} and the dominated convergence theorem,
	\begin{align*}
	\limsup_{\beta\rightarrow\infty}\int_t^1\beta\alpha_\beta(s)\xi''(s) \phi(s)ds&\leq  \phi(1)\Bigl(L-\int_0^t\xi''(s)\n\phi(ds)\Bigr)+L  \max_{t\leq s\leq 1}| \phi(s)- \phi(1)|,\\
	\liminf_{\beta\rightarrow\infty}\int_t^1\beta\alpha_\beta(s)\xi''(s) \phi(s)ds&\geq  \phi(1)\Bigl(L-\int_0^t\xi''(s)\n\phi(ds)\Bigr)-L  \max_{t\leq s\leq 1}| \phi(s)- \phi(1)|.
	\end{align*}
	On the other hand, using \eqref{eq-1} and the dominated convergence theorem again,
	\begin{align*}
	&\limsup_{\beta\rightarrow\infty}\int_0  ^1\beta\alpha_\beta(s)\xi''(s) \phi(s)ds\\
	&\leq \limsup_{\beta\rightarrow\infty}\int_0  ^t\beta\alpha_\beta(s)\xi''(s) \phi(s)ds+\limsup_{\beta\rightarrow\infty}\int_t^1\beta\alpha_\beta(s)\xi''(s) \phi(s)ds\\
	&\leq\int_0  ^t \phi(s)\gamma(s)\xi''(s)ds+  \phi(1)\Bigl(L-\int_0^t\xi''(s)\n\phi(ds)\Bigr)+L  \max_{t\leq s\leq 1}| \phi(s)- \phi(1)|
	\end{align*} 
	and
	\begin{align*}
	&\liminf_{\beta\rightarrow\infty}\int_0  ^1\beta\alpha_\beta(s)\xi''(s) \phi(s)ds\\
	&\geq \liminf_{\beta\rightarrow\infty}\int_0  ^t\beta\alpha_\beta(s)\xi''(s) \phi(s)ds+\liminf_{\beta\rightarrow\infty}\int_t^1\beta\alpha_\beta(s)\xi''(s) \phi(s)ds\\
	&\geq \int_0^t \phi(s)\gamma(s)\xi''(s)ds+  \phi(1)\Bigl(L-\int_0^t\xi''(s)\n\phi(ds)\Bigr)-L  \max_{t\leq s\leq 1}| \phi(s)- \phi(1)|.
	\end{align*} 
	Since these two inequalities hold for all $t\in(0,1),$ letting $t\rightarrow 1-$, the continuity of $\phi$ at $1$ implies the announced result.
\end{proof}

The proof of Theorem \ref{thm} is completed by the following two lemmas, upper and lower bounds. We deal with the upper bound first. It agrees with the one that appeared in  \cite[Theorem 6]{G}.

\begin{lemma}[Upper bound] \label{up}
 We have that
	\begin{align*}
	GSE\leq \inf_{\gamma\in\mathcal{U}}\Bigl(\Psi_\gamma(0,h)-\frac{1}{2}\int_0^1\gamma(s)s\xi''(s)ds\Bigr).
	\end{align*}
\end{lemma}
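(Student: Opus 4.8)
The plan is to obtain the upper bound by exhibiting, for each fixed $\gamma\in\mathcal{U}$, a family of candidates $\alpha_\beta\in\mathcal{M}$ whose Parisi functionals $\mathcal{P}_\beta(\alpha_\beta)$ converge to $\mathcal{P}(\gamma)$ as $\beta\to\infty$; since $F(\beta)=\inf_{\alpha}\mathcal{P}_\beta(\alpha)\le\mathcal{P}_\beta(\alpha_\beta)$ and $GSE=\lim_{\beta\to\infty}F(\beta)$ by \eqref{GS}, this yields $GSE\le\mathcal{P}(\gamma)$, and taking the infimum over $\gamma$ finishes the proof. By continuity of $\gamma\mapsto\mathcal{P}(\gamma)$ on $(\mathcal{U},d)$ and density of $\mathcal{U}_d$ in $\mathcal{U}$, it suffices to treat $\gamma\in\mathcal{U}_d$. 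For such a step-like $\gamma$, I would set $\alpha_\beta(t)=\beta^{-1}\min(\gamma(t),\beta)$, so that $\alpha_\beta\in\mathcal{M}$ (it is nondecreasing, right-continuous, takes values in $[0,1]$), $\beta\alpha_\beta=\min(\gamma,\beta)\to\gamma$ pointwise on $[0,1)$ and monotonically, and $\int_0^1\beta\alpha_\beta(s)\xi''(s)\,ds\to\int_0^1\gamma(s)\xi''(s)\,ds=:L$ by monotone convergence (finite since $\gamma\in\mathcal{U}$ and $\xi''$ is bounded on $[0,1]$).

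Next I would compare $\mathcal{P}_\beta(\alpha_\beta)$ with $\mathcal{P}(\gamma)$ term by term. For the linear term, $\frac12\int_0^1\beta\alpha_\beta(s)s\xi''(s)\,ds\to\frac12\int_0^1\gamma(s)s\xi''(s)\,ds$, again by monotone convergence. The constant term $\beta^{-1}\log 2\to 0$. For the PDE term $\Psi_{\alpha_\beta,\beta}(0,h)$, I would use the variational representation \eqref{cor1:eq1}: writing $\eta_\beta:=\beta\alpha_\beta$,
\begin{align*}
\Psi_{\alpha_\beta,\beta}(0,h)=\sup_{u\in D[0,1]}\Bigl(\frac{1}{\beta}\e\log\cosh\beta\bigl(h+\textstyle\int_0^1 u\,\xi''\eta_\beta\,dr+\int_0^1(\xi'')^{1/2}dW_r\bigr)-\frac12\int_0^1\e u^2\,\xi''\eta_\beta\,dr\Bigr),
\end{align*}
and compare with the zero-temperature representation \eqref{eq1} for $\Psi_\gamma(0,h)$ with the same class $D[0,1]$. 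Using the elementary bounds $0\le\beta^{-1}\log\cosh\beta y-|y|\le\beta^{-1}\log 2$ for all $y$, the two objective functionals differ, for each fixed $u$, by at most $\beta^{-1}\log 2$ plus the error coming from replacing $\eta_\beta$ by $\gamma$ inside the drift and the cost. The drift error is controlled by $\xi''(1)\|\,|u|\,\|_\infty\int_0^1(\gamma-\eta_\beta)\,dr\to 0$ and the cost error by $\frac12\int_0^1(\gamma-\eta_\beta)\xi''\,dr\to 0$, both uniformly in $u\in D[0,1]$ since $|u|\le 1$. Taking suprema over $u$ in both representations and using that these uniform bounds pass through the supremum, I conclude $|\Psi_{\alpha_\beta,\beta}(0,h)-\Psi_\gamma(0,h)|\to 0$. (Alternatively, Lemma~\ref{lem3} applied with a suitable $\phi$ encoding the cost and a limiting-drift argument handles the convergence, but the direct $u$-uniform comparison is cleaner here because $\gamma\in\mathcal{U}_d$ already gives $\eta_\beta\uparrow\gamma$ without a boundary atom.) Combining the three pieces gives $\mathcal{P}_\beta(\alpha_\beta)\to\Psi_\gamma(0,h)-\frac12\int_0^1\gamma(s)s\xi''(s)\,ds=\mathcal{P}(\gamma)$.

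The main obstacle is the uniformity in $u\in D[0,1]$ of the comparison of the two variational functionals: one must ensure that the discrepancy between the finite-$\beta$ objective and the zero-temperature objective vanishes \emph{before} taking the supremum over controls, so that the suprema themselves converge. This is where the bound $|u|\le 1$ built into $D[0,1]$ and the finiteness of $\int_0^1\gamma\,dr$ are essential — they make the drift-replacement and cost-replacement errors $O\bigl(\int_0^1(\gamma-\eta_\beta)\,dr\bigr)$ independently of the chosen $u$ — and it is also the reason for working with $\gamma\in\mathcal{U}_d$ first and then invoking the Lipschitz estimate \eqref{eq2} and continuity of $\mathcal{P}$ to pass to general $\gamma\in\mathcal{U}$. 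The remaining steps (monotone convergence for the linear and the limiting integral terms, the $\beta^{-1}\log 2$ estimates) are routine.
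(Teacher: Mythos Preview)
Your proposal is correct and follows essentially the same route as the paper: pick the same candidate $\alpha_\beta=\beta^{-1}\gamma$ on $[0,1)$ (your $\min(\gamma,\beta)$ equals $\gamma$ for $\gamma\in\mathcal{U}_d$ and $\beta$ large), show $\mathcal{P}_\beta(\alpha_\beta)\to\mathcal{P}(\gamma)$, and then pass to general $\gamma\in\mathcal{U}$ by density/Lipschitz (the paper uses the truncation $\gamma_n=\min(\gamma,n)$). The only cosmetic differences are that the paper observes directly that $\Psi_{\alpha_\beta,\beta}$ solves the \emph{same} PDE as $\Psi_\gamma$ with boundary $\beta^{-1}\log\cosh\beta x\to|x|$, whereas you run the comparison through the variational formulas; also note your elementary inequality has the sign reversed (one has $-\beta^{-1}\log 2\le \beta^{-1}\log\cosh\beta y-|y|\le 0$), though this does not affect the argument.
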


\begin{proof}
	Assume that $\gamma\in \mathcal{U}$ with $\gamma(1-)<\infty.$ For $\beta>\gamma(1-),$ define 
	\begin{align*}
	\alpha_\beta(s)&=\frac{\gamma(s)}{\beta}1_{[0,1)}(s)+\delta_1(s).
	\end{align*}
	Since $\gamma$ is nonnegative and nondecreasing with right continuity, we see that $\alpha_\beta\in\mathcal{M}.$ From the PDE \eqref{pde}, $\Psi_{\alpha_\beta,\beta}$ is given by
	\begin{align*}
	\partial_t{\Psi}_{\alpha_{\beta},\beta}(t,x)&=-\frac{\xi''(t)}{2}\Bigl(\partial_x^2{\Psi}_{\alpha_\beta,\beta}(t,x)+\gamma(t)\bigl(\partial_x{\Psi}_{\alpha_\beta,\beta}(t,x)\bigr)^2\Bigr)
	\end{align*}
	for $(t,x)\in[0,1)\times\mathbb{R}$ with boundary condition ${\Psi}_{\gamma,\beta}(1,x)=\beta^{-1}\log\cosh \beta x.$ In other words, $\Psi_{\alpha_\beta,\beta}$ follows the same PDE. As now the boundary condition satisfies $\lim_{\beta\rightarrow\infty}\beta^{-1}\log\cosh \beta x=|x|$, it can be seen that $\lim_{\beta\rightarrow\infty}\Psi_{\alpha_\beta,\beta}=\Psi_\gamma,$ where $\Psi_\gamma$ is the solution to \eqref{pde3}. On the other hand, note that
	\begin{align*}
	\int_0^1\beta\alpha_{\beta}(s)s\xi''(s)ds&=\int_0^{1}s\xi''(s)\gamma(s)ds.
	\end{align*}
	Thus, we can conclude that
	\begin{align*}
	\lim_{\beta\rightarrow\infty} \mathcal{P}_{\beta}(\alpha_\beta)&=\lim_{\beta\rightarrow\infty}\Psi_{\alpha_\beta,\beta}(0,h)-\lim_{\beta\rightarrow\infty}\int_0^1\beta\alpha_{\beta}(s)s\xi''(s)ds\\
	&=\Psi_\gamma(0,h)-\frac{1}{2}\int_0^1s\xi''(s)\gamma(s)ds
	\end{align*}
	and so
	\begin{align}
	\begin{split}\label{eq6}
	\Psi_\gamma(0,h)-\frac{1}{2}\int_0^1s\xi''(s)\gamma(s)ds=\lim_{\beta\rightarrow\infty}\mathcal{P}_{\beta}(\alpha_\beta)&\geq \lim_{\beta\rightarrow\infty}\inf_{\alpha\in\mathcal{M}}\mathcal{P}_\beta(\alpha)=GSE.
	\end{split}
	\end{align}
	To establish the same inequality for any $\gamma\in\mathcal{U}$ without the assumption $\gamma(1-)<\infty,$ we may apply the truncation $\gamma_n=\min(\gamma,n)$ of $\gamma$ to \eqref{eq6},
	\begin{align*}
	\Psi_{\gamma_n}(0,h)-\frac{1}{2}\int_0^1s\xi''(s)\gamma_n(s)ds\geq GSE.
	\end{align*}
	Since $(\gamma_n)_{n\geq 1}$ converges to $\gamma$ under the distance $d$, using \eqref{eq2} for $\gamma_n$ and $\gamma$ leads to \eqref{eq6} for any $\gamma\in\mathcal{U}.$ This finishes our proof.
\end{proof}

    Next, we establish the lower bound, which is the most critical part in our approach.
    
\begin{lemma}[Lower bound]\label{lp}
	We have that
	\begin{align*}
	GSE&\geq \inf_{\gamma\in\mathcal{U}}\Bigl(\Psi_{\gamma}(0,h)-\frac{1}{2}\int_0^1\xi''(s)s\gamma(s)ds\Bigr).
	\end{align*}
\end{lemma}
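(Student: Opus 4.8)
The plan is to route the whole limit $\beta\to\infty$ through the stochastic control representation \eqref{cor1:eq1}, where the singularity carried by $\beta\alpha_{P,\beta}$ near $t=1$ becomes visible and harmless. Pass to the subsequence from \eqref{eq-3}--\eqref{eq-4} along which $\gamma_0=\lim_\beta\beta\alpha_{P,\beta}$ exists weakly on $[0,1)$ and $L_0=\lim_\beta\int_0^1\beta\alpha_{P,\beta}(s)\xi''(s)\,ds$ exists; along it $GSE=\lim_\beta\mathcal{P}_\beta(\alpha_{P,\beta})$ by \eqref{GS}. Note $\gamma_0\in\mathcal{U}$: it is nonnegative and nondecreasing (take its right continuous version), $\int_0^1\gamma_0(s)\xi''(s)\,ds\le L_0<\infty$ by \eqref{eq-1} and Fatou, and $\xi''$ is bounded below on compact subsets of $(0,1]$, so $\int_0^1\gamma_0<\infty$. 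It therefore suffices to prove $GSE\ge\mathcal{P}(\gamma_0)$.

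First I would insert an arbitrary $u\in D[0,1]$ into \eqref{cor1:eq1} and use $\beta^{-1}\log\cosh\beta y\ge|y|-\beta^{-1}\log 2$ to cancel the term $\beta^{-1}\log 2$ in $\mathcal{P}_\beta$; this gives, for every $u\in D[0,1]$,
\begin{align*}
\mathcal{P}_\beta(\alpha_{P,\beta}) &\ge \e\Bigl|h+\int_0^1 u(r)\xi''(r)\beta\alpha_{P,\beta}(r)\,dr+\int_0^1\xi''(r)^{1/2}dW_r\Bigr|\\
&\quad-\frac12\int_0^1\e u(r)^2\xi''(r)\beta\alpha_{P,\beta}(r)\,dr-\frac12\int_0^1 r\,\xi''(r)\beta\alpha_{P,\beta}(r)\,dr.
\end{align*}
Now restrict to those $u$ that are moreover left continuous at $1$, i.e.\ $\lim_{r\to1-}u(r)=u(1)$ a.s.; for such $u$ one may let $\beta\to\infty$ by applying Lemma \ref{lem3} pathwise (with $\phi$ successively $u(\cdot,\omega)$, $u(\cdot,\omega)^2$ and the identity) and then dominated convergence, the stochastic integral being a fixed Gaussian and the drift integrands being uniformly bounded since $\int_0^1\beta\alpha_{P,\beta}\xi''\to L_0$. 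The result is
\begin{align*}
GSE \ge \e|Y^u|-\frac12\int_0^1\e u(r)^2\xi''(r)\,\nu_0(dr)-\frac12\int_0^1 r\,\xi''(r)\,\nu_0(dr),
\end{align*}
where $Y^u=h+\int_0^1 u(r)\xi''(r)\,\nu_0(dr)+\int_0^1\xi''(r)^{1/2}dW_r$ and $\nu_0(dr)=1_{[0,1)}(r)\gamma_0(r)\,dr+m_1\delta_1(dr)$ with $m_1=\xi''(1)^{-1}\bigl(L_0-\int_0^1\gamma_0\xi''\bigr)\ge 0$, exactly the measure of Lemma \ref{lem3}.

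The crux is to take the supremum over such $u$ and to check that the atom $m_1\delta_1$ contributes nothing net. The atom enters only through the terminal value $u(1)$: it puts a ``terminal kick'' $m_1\xi''(1)u(1)$ inside $|\cdot|$ at cost $\tfrac12 m_1\xi''(1)u(1)^2$, and it adds $\tfrac12 m_1\xi''(1)$ to the linear term via $\tfrac12\int_0^1 r\xi''\,d\nu_0=\tfrac12\int_0^1 r\xi''\gamma_0+\tfrac12 m_1\xi''(1)$. Given $\tilde u\in D[0,1]$, take $u$ equal to $\tilde u$ on $[0,1-\delta]$ and equal to $\mathrm{sign}(Z_r)$ on $[1-\delta,1]$, where $Z_r=h+\int_0^r u(w)\xi''(w)\gamma_0(w)\,dw+\int_0^r\xi''(w)^{1/2}dW_w$; this $u$ is admissible and left continuous at $1$ with $u(1)=\mathrm{sign}(Z_1)$ (here one uses that $Z_1$ is atomless, which holds because $\xi''$ is non-degenerate). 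As $\delta\to0$ the modification interval shrinks, $Z_1\to Z_1^{\tilde u}$ in $L^1$, and the bracket $|Z_1+m_1\xi''(1)u(1)|-\tfrac12 m_1\xi''(1)u(1)^2$ equals $|Z_1|+\tfrac12 m_1\xi''(1)$ pointwise, so in the limit this gain $\tfrac12 m_1\xi''(1)$ cancels the linear-term contribution. What remains is
\begin{align*}
GSE &\ge \sup_{\tilde u\in D[0,1]}\Bigl(\e\bigl|h+\int_0^1\tilde u(r)\xi''(r)\gamma_0(r)\,dr+\int_0^1\xi''(r)^{1/2}dW_r\bigr|\\
&\qquad\qquad-\frac12\int_0^1\e\tilde u(r)^2\xi''(r)\gamma_0(r)\,dr\Bigr)-\frac12\int_0^1 s\,\xi''(s)\gamma_0(s)\,ds,
\end{align*}
and by \eqref{eq1} (valid for all $\gamma\in\mathcal{U}$) the supremum is $\Psi_{\gamma_0}(0,h)$, whence $GSE\ge\Psi_{\gamma_0}(0,h)-\tfrac12\int_0^1 s\,\xi''(s)\gamma_0(s)\,ds=\mathcal{P}(\gamma_0)\ge\inf_{\gamma\in\mathcal{U}}\mathcal{P}(\gamma)$.

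The main obstacle is exactly the step involving the atom at $1$: as $\beta\to\infty$ the measures $\beta\alpha_{P,\beta}$ shed the mass $m_1$ onto the endpoint $1$ (equivalently $q_{P,\beta}\to1$ with $\beta\alpha_{P,\beta}\equiv\beta$ on $[q_{P,\beta},1)$), and one must show its effect on $\lim_\beta\Psi_{\alpha_{P,\beta},\beta}(0,h)$ is exactly matched by its effect on $\lim_\beta\tfrac12\int_0^1\beta\alpha_{P,\beta}(s)s\xi''(s)\,ds$. A direct PDE attack is awkward precisely because the behaviour of $\beta\alpha_{P,\beta}$ just below $q_{P,\beta}$ is not quantitatively controlled; the point of the control representation is that there the escaped mass has the transparent effect of an optimally used terminal kick of magnitude $m_1\xi''(1)$, whose net value $+\tfrac12 m_1\xi''(1)$ coincides with the weight that same mass carries in the linear term -- the ``marvelous cancellation'' of the introduction. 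The remaining points (restricting to controls left continuous at $1$ so that Lemma \ref{lem3} applies, the pathwise use of that lemma together with dominated convergence, the atomlessness of $Z_1$, and the $\delta\to0$ approximation needed to restore the full supremum in \eqref{eq1}) are routine.
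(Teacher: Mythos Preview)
Your proposal is correct and follows essentially the same route as the paper: both pass to the subsequential limit $\gamma_0,L_0$, lower-bound $\Psi_{\alpha_{P,\beta},\beta}(0,h)$ by the variational formula \eqref{cor1:eq1} with a well-chosen control, invoke Lemma~\ref{lem3} to send $\beta\to\infty$, and then exploit the identity $|S+\mathrm{sign}(S)c|=|S|+c$ (for $c\ge 0$) to cancel the atom $\xi''(1)\nu_0(\{1\})$ against the corresponding contribution from the cost and linear terms. The only cosmetic differences are that the paper replaces $\mathrm{sign}$ by continuous approximants $g_n$ (so that the left-continuity hypothesis of Lemma~\ref{lem3} is immediate, without appealing to atomlessness of $Z_1$) and defines the modified control on $[\varepsilon,1]$ via the \emph{original} $u$ rather than self-referentially---this avoids having to solve an SDE with discontinuous drift, but the endgame and the cancellation \eqref{ob}--\eqref{ob2} are identical to yours.
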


\begin{proof}
	Recall $\gamma_0$ and $L_0$ from \eqref{eq-3} and \eqref{eq-4}.
	Define $$
	\nu_0(ds)=\gamma_0(s)1_{[0,1)}(s)ds+\xi''(1)^{-1}\Bigl(L_0-\int_0^1\gamma_0(s)\xi''(s)ds\Bigr)\delta_1(ds).
	$$
	For each $n\geq 1,$ consider the function $g_n$ defined by
	\begin{align*}
	g_n(x)&=\left\{
	\begin{array}{ll}
	1,&\mbox{if $x\geq 0$},\\
	2nx+1,&\mbox{if $-n^{-1}\leq x<0$},\\
	-1,&\mbox{if $x<-n^{-1}$.}
	\end{array}
	\right.
	\end{align*}
	Note that $(g_n)_{n\geq 1}$ is a sequence of continuous functions with $\|g_n\|_\infty\leq 1$ such that 
	$$
	\lim_{n\rightarrow \infty}g_n(x)=\mbox{sign}(x),
	$$
	where
	 $$
	 \mbox{sign}(x):=\left\{
	 \begin{array}{ll}
	 1,&\mbox{if $x\geq 0$},\\
	 -1,&\mbox{if $x<0$}.
	 \end{array}
	 \right.
	 $$
	Let $u\in D[0,1]$. For any $\varepsilon\in(0,1)$ and $n\geq 1,$ define
	\begin{align*}
	\phi_{\varepsilon,n}(s)&=u(s)1_{[0,\varepsilon)}(s)+g_n\Bigl(h+\int_0^su(r)\xi''(r)\gamma_0(r)dr+\int_0^s\xi''(r)^{1/2}dW_r\Bigr)1_{[\varepsilon,1]}(s).
	\end{align*}
	Then $\phi_{\varepsilon,n}\in D[0,1]$ and $\lim_{s\rightarrow 1-}\phi_{\varepsilon,n}(s)=\phi_{\varepsilon,n}(1)$ since $g_n$ is continuous on $\mathbb{R}$. In addition, the following limits hold,
	\begin{align*}
	\phi_{n}(s):=\lim_{\varepsilon\rightarrow 1-}\phi_{\varepsilon,n}(s)&=
	u(s)1_{[0,1)}(s)+g_n(S)1_{\{1\}}(s)
	\end{align*}
	and
	\begin{align*}
			\lim_{n\rightarrow\infty}\phi_n(s)&=u(s)1_{[0,1)}(s)+\mbox{sign}(S)1_{\{1\}}(s),
	\end{align*}
	where 
	$$
	S:=h+\int_{0}^1 u(s)\xi''(s)\gamma_0(s)ds+\int_0^1\xi''(s)^{1/2}dW_s.
	 $$
	Now, using \eqref{GS} and the formula \eqref{cor1:eq1} applied to $\Psi_{\alpha_{P,\beta},\beta}(0,h)$, the Fatou lemma and Lemma \ref{lem3} together imply
	\begin{align*}
	GSE&=\lim_{\beta\rightarrow\infty}\mathcal{P}_\beta(\alpha_{P,\beta})\\
	&=\lim_{\beta\rightarrow\infty}\Bigl(\frac{\log 2}{\beta}+\Psi_{\alpha_{P,\beta},\beta}(0,h)-\frac{1}{2}\int_0^1\beta\alpha_{P,\beta}(s)\xi''(s)sds\Bigr)\\
	&\geq \lim_{\beta\rightarrow\infty}\Big(\frac{1}{\beta}\e \log \cosh \beta\Bigl(h+\int_0^1\beta\alpha_{P,\beta}(s)\xi''(s)\phi_{\varepsilon,n}(s)ds+\int_0^1\xi''(s)^{1/2}dW_s\Bigr)\\
	&\qquad \qquad\,\,-\frac{1}{2}\int_0^1\beta\alpha_{P,\beta}(s)\xi''(s)(\e \phi_{\varepsilon,n}(s)^2+s)ds\Bigr)\\
	&\geq \e \Bigl|h+\int_0^1 \phi_{\varepsilon,n}(s)\xi''(s)\nu_0(ds)+\int_0^1\xi''(s)^{1/2}dW_s\Bigr|-\frac{1}{2}\int_0^1(\e  \phi_{\varepsilon,n}(s)^2+s)\xi''(s)\nu_0(ds).
	\end{align*}
	By the dominated convergence theorem, letting $\varepsilon\rightarrow 1-$ and then $n\rightarrow\infty$ implies
	\begin{align*}
	GSE&\geq \e \bigl|S+\mbox{sign}(S)\xi''(1)\nu_0(1)\bigr|\\
	&\quad-\frac{1}{2}\int_0^1(\e  u(s)^2+s)\xi''(s)\gamma_0(s)ds-\frac{1}{2}\bigl(\e \bigl(\mbox{sign}(S)\bigr)^2+1\bigr)\xi''(1)\nu_0(1).
	\end{align*} 
    Since
	\begin{align}
	\begin{split}
	\label{ob}
    &\bigl|S+\mbox{sign}(S)\xi''(1)\nu_0(1)\bigr|\\
	&=\bigl(S+\xi''(1)\nu_0(1)\bigr)1_{\{S> 0\}}-\bigl(S-\xi''(1)\nu_0(1)\bigr)1_{\{S< 0\}}+\xi''(1)\nu_0(1)1_{\{S=0\}}\\
	&=|S|+\xi''(1)\nu_0(1)
		\end{split}
	\end{align}
	and
		\begin{align}\label{ob2}
		\bigl(\e \bigl(\mbox{sign}(S)\bigr)^2+1\bigr)\xi''(1)\nu_0(1)&=2\xi''(1)\nu_0(1),
		\end{align}
		it follows that the terms $\xi''(1)\nu_0(1)$ cancel each other and we obtain
		\begin{align*}
		GSE&\geq \e \bigl|S\bigr|-\frac{1}{2}\int_0^1(\e  u(s)^2+s)\xi''(s)\gamma_0(s)ds\\
		&=\e\Bigl| h+\int_{0}^1 u(s)\xi''(s)\gamma_0(s)ds+\int_0^1\xi''(s)^{1/2}dW_s\Bigr|\\
		&\quad-\frac{1}{2}\int_0^1(\e  u(s)^2+s)\xi''(s)\gamma_0(s)ds.
		\end{align*}
	Since this holds for all $u\in D[0,1]$, 
	we get that by noting \eqref{eq1} holds for any $\gamma\in\mathcal{U}$,
	\begin{align*}
	GSE&\geq \Psi_{\gamma_0}(0,h)-\frac{1}{2}\int_0^1s\xi''(s)\gamma_0(s)ds\\
	&\geq \inf_{\gamma\in\mathcal{U}}\Bigl(\Psi_{\gamma}(0,h)-\frac{1}{2}\int_0^1s\xi''(s)\gamma(s)ds\Bigr).
	\end{align*}
	This finishes our proof.
\end{proof}

\begin{remark}
	\rm From the construction of $\phi_{\varepsilon,n}$, one sees that the crucial observations \eqref{ob} and \eqref{ob2} allow us to cancel out the common terms $\xi''(1)\nu_0(1)$ arising from the jump of $\nu_0$ at $1.$ This explains how the effect of the singularity of the Parisi PDE near $1$ in the limiting procedure is eliminated by the linear term of the Parisi functional as mentioned in the introduction.
\end{remark}
	
\begin{remark}\rm
It should be mentioned that while the expression \eqref{thm:eq1} depends only on $\gamma,$ the Parisi formula for the ground state energy in the spherical mixed $p$-spin model \cite{ArnabChen15,JT} relies on $\gamma$ and one extra variable, $L$, playing the role like $L_0$ in \eqref{eq-4}. 
\end{remark}

\begin{proof}[\bf Proof of Theorem \ref{thm}]
	The equality \eqref{thm:eq1} follows directly from Lemmas \ref{up} and \ref{lp}. 
\end{proof}


\begin{thebibliography}{99}
  		
  		\small
  		
  		
  		
%  		\bibitem{Special} Andrews, G., Askey, R., Roy, R.: Special functions. Encyclopedia of Mathematics and its Applications. {\bf 71}, Cambridge University Press (1999)
%  		
%  		\bibitem{Arguin} Arguin, L.-P., Bovier, A., Kistler, N.: The extremal process of branching Brownian motion. {\it Probab. Theory and Rel. Fields}, {\bf 157}, no. 3, 535--574 (2013)
%  		
%  		\bibitem{AK} 
%  		Arguin, L.-P., Kistler, N: Microcanonical analysis of the random energy model in a random magnetic field. {\it J. Stat. Phys.}, {\bf157}, no. 1, 1--16 (2014)
%  		
%  		\bibitem{Zindy} 
%  		Arguin, L.-P.: Extrema of log-correlated random variables: principles and examples. arXiv:1601.00582 (2016)
%  		
  		\bibitem{AC13}
  		Auffinger, A., Chen, W.-K.: On properties of Parisi measures. {\it Probab. Theory Related Fields}, {\bf 161}, no. 3, 817--850 (2015)
  		
  		\bibitem{AC14}
  		Auffinger, A., Chen, W.-K.: The Parisi formula has a unique minimizer. {\it Comm. Math. Phys.}, {\bf 335}, no. 3, 1429--1444 (2015)
  		
%  		\bibitem{AC15}
%  		Auffinger, A., Chen, W.-K.: The Legendre structure of the Parisi formula. To appear in  {\it Comm. Math. Phys.} (2015)
%  		
%  		
%  		\bibitem{Slides} 
%  		Batir, N.: Some new inequalities for gamma and polygamma functions. {\it Journal of Ineq. in Pure and Appl. Math.}, {\bf 6}, 4, Article 103 (2005)
%  		
  		
  		
%  		\bibitem{BK} Bolthausen, E., Kistler, N.: Universal structures in some mean field spin glasses and an application. {\it J. Math. Phys.}, {\bf 49}, 25--205 (2008)
  		
%  		\bibitem{BK2} Bolthausen, E., Kistler, N.: A quenched large deviation principle and a Parisi formula for a
%  		Perceptron version of the GREM. Probability in Complex Physical Systems, Springer Proceedings in Mathematics, 11, 425–442 (2012)
%  		
%  		\bibitem{Borwein} Borwein, J., Zhu, Q.: Techniques of variational analysis. CMS Books in Mathematics/Ouvrages de Math\'ematiques de la SMC, 20, Springer-Verlag, New York (2005) 
%  		
  		
  		\bibitem{BK}
  		Bovier, A., Klimovsky, A.: The Aizenman-Sims-Starr and Guerra's schemes for the SK model with multidimensional spins. {\it Electron. J. Probab.}, {\bf 14}, 161--241 (2009)
  		
%  		\bibitem{Chayes09} Chayes, J.T., Mertens, S., Nair C.: Proof of the local REM conjecture for number partitioning I: Constant energy scales. {\it Random Struct. Algorithms}, {\bf 34}, 217--240 (2009)
%  		
%  		\bibitem{C13}
%  		Chen, W.-K.: The Aizenman-Sims-Starr scheme and Parisi formula for mixed p-spin spherical models. {\it Elec. Journal Probab.}, {\bf 18}, no. 94, 1--14 (2013)
%  		
%  		\bibitem{C14}
%  		Chen, W.-K.: Variational representation for the Parisi functional and the two-dimensional Guerra-Talagrand bound. arXiv:1501.06635 (2015)
%  		
  		\bibitem{ArnabChen15} 
  		Chen, W.-K., Sen, A.: Parisi formula, disorder chaos and fluctuation for the ground state energy in the spherical mixed p-spin models. arXiv:1512.08492 (2015)
  		
		
		\bibitem{CR02} Crisanti, A., Rizzo, T.: Analysis of the $\infty$-replica symmetry breaking solution of the Sherrington-Kirkpatrick
model, {\it Physical Review E} {\bf 65}, 046137(2002) 
  		
%  		\bibitem{CS} 
%  		Crisanti, A., Sommers, H.-J.: The spherical {p}-spin interaction spin glass model: the statics.  {\it Zeitschrift f\"{u}r Physik B Condensed Matter}, 87, 341--354 (1992) 
%  		
%  		\bibitem{DZ} 
%  		Dembo, A., Zeitouni, O.: Large Deviations Techniques and Applications. 2nd ed., Springer-Verlag, New York (2009)
%  		
		\bibitem{DemboMontanariSen}
		
		Dembo, A., Montanari, A., Sen, S.: Extremal cuts of sparse random graphs. arXiv:1503.03923. (2015)
		
%  		\bibitem{Ellis} 
%  		Ellis, R.: Entropy, Large Deviations, and Statistical Mechanics. 1st ed., Springer-Verlag, Berlin (2006)
%  		
%  		\bibitem{G98} 
%  		Ghirlanda, S., Guerra, F.: General properties of overlap probability distributions in disordered spin systems. Towards Parisi ultrametricity. {\it J. Phys. A}, {\bf 31}, no. 46, 9149--9155 (1998)
  		
  		\bibitem{G}
  		 Guerra, F.: Broken replica symmetry bounds in the mean field spin glass model. {\it Comm. Math. Phys.}, {\bf 233}, no. 1, 1-–12 (2003)
%  		
%  		
%  		\bibitem{G13} 
%  		Guerra, F.: The phenomenon of spontaneous replica symmetry breaking in complex statistical mechanics systems. {\it J. Phys.: Conf. Ser.}, {\bf 442}, 012013  (2013)
%  		
  		%\bibitem{G15} Guerra, F.: Talk given at the disordered systems, random spatial processes and some applications, conference in IHP, Paris, France, (2015)
%  		\bibitem{G15} Guerra, F.: Legendre structures in statistical mechanics for ordered and disordered systems, Cambridge University Press, in \textit{Advances in disordered systems,
%  			random processes and some applications}, Contucci, P. et al, eds, in print (2016)
%  		
		\bibitem{JT2} Jagannath, A., Tobasco, I.: A dynamic programming approach to the Parisi functional. {\it Proc. Amer. Math. Soc.}, {\bf 144},  3135--3150 (2016)
		
  		\bibitem{JT}
  		Jagannath, A., Tobasco, I.: Low temperature asymptotics of spherical mean field spin glasses. arXiv:1602.00657 (2016)
  		
%  		\bibitem{KS} 
%  		Kaplan, S.: On the second dual of the space of continuous functions. {\it Trans. Amer. Math. Soc.}, {\bf 86}, 70--90 (1957) 
%  		
		\bibitem{KLL} Kim, S.-Y., Lee, S., Lee, J.: Ground-state energy and energy landscape of the Sherrington-Kirkpatrick spin glass. {\it Phys. Rev. B}, {\bf 76}, 184412 (2007)
		
		
		\bibitem{MMbook} M\'{e}zard, M., Montanari, A.: Information, physics, and computation, {\it Oxford University Press,} (2009)
%  		\bibitem{MPV} 
%  		M\'{e}zard, M., Parisi, G., Virasoro, M.: Spin glass theory and beyond. World Scientific, {\bf 9}, Singapore (2004)
%  		
%  		\bibitem{MO} 
%  		Moriarty, A., O'Connell, N.: On the free energy of a directed polymer in a Brownian environment. {\it Markov Process. Related Field}, {\bf 13},  251--266 (2007)
%  		
%  		\bibitem{OY}  
%  		O'Connell, N.,  Yor, M.: Brownian analogues of Burke's theorem. Stochastic Process. Appl., {\bf 96}, no 2, 285--304  (2001)
%  		
		\bibitem{OS1}  Oppermann, R., Schmidt, M. J., Sherrington, D.: Double Criticality of the Sherrington-Kirkpatrick Model at T=0. {\it Phys. Rev. Lett.}, {\bf 98}, 127201 (2007)
		
		\bibitem{OS2} Oppermann, R., Sherrington, D.: Scaling and renormalization group in replica-symmetry-breaking space: evidence for a simple analytical solution of the Sherrington-Kirkpatrick model at zero temperature.  {\it Phys. Rev. Lett.}, {\bf 95}, 197203 (2005)
		
		
		
  		\bibitem{Parisi}
  		Parisi, G.:  Infinite number of order parameters for spin-glasses. {\it Phys. Rev. Lett.}, {\bf 43}, 1754--1756 (1979)
  		
  		\bibitem{P05} 
  		Panchenko, D.:  Free energy in the generalized Sherrington-Kirkpatrick mean field model. {\it Rev. Math. Phys.}, {\bf 17}, no. 7, 793--857 (2005)
  		
  		\bibitem{P08}
  		Panchenko, D.: On differentiability of the Parisi formula.  {\it Electron. Commun. Probab.}, {\bf 13}, 241--247 (2008)
  		
  		
%  		\bibitem{P10}
%  		Panchenko, D.: The Ghirlanda-Guerra identities for mixed p-spin model. {\it C. R. Acad. Sci. Paris, Ser. I}, {\bf 348}, 189--192 (2010)
%  		
%  		\bibitem{P13}
%  		Panchenko, D.: The Sherrington-Kirkpatrick model. Springer Monographs in Mathematics. Springer, New York (2013)
%  		
  		\bibitem{Panchenko} 
  		Panchenko, D.: The Parisi formula for mixed $p$-spin models. {\it Ann. Probab.}, {\bf 42}, no. 3, 946--958 (2014)
  		
  		\bibitem{P15}
  		Panchenko, D.: The free energy in a multi-species Sherrington-Kirkpatrick model. {\it Ann. Probab.}, {\bf 43}, no. 6, 3494--3513 (2015)
  		
  		\bibitem{P152}
  		Panchenko, D.: Free energy in the Potts spin glass. arXiv:1512.00370 (2015)
  		
  		\bibitem{P153}
  		Panchenko, D.: Free energy in the mixed $p$-spin models with vector spins. arXiv:1512.04441 (2015) 
  		
		\bibitem{Pankov} Pankov, S.: Low-temperature solution of the Sherrington-Kirkpatrick model.  {\it Phys. Rev. Lett.}, {\bf 96}, 197204 (2006)
		
%  		\bibitem{DMC} 
%  		Rovira, C., Tindel, S.: On the Brownian directed polymer in a Gaussian random environment. {\it J. Funct. Anal.}, {\bf 222}, 178--201 (2005)
%  		
%  		\bibitem{Timo}
%  		Sepp\"al\"ainen, T.: Scaling for a one-dimensional directed polymer with boundary. {\it Ann. Probab.}, {\bf 40}, 19--73 (2012)
%  		
%  		\bibitem{RT}
%  		Rassoul-agha, F., Sepp\"al\"ainen, T.: A course on large deviations with an introduction to Gibbs measures. Graduate Studies in Mathematics, American Mathematical Society (2015)
%  		
  		\bibitem{T03}
  		Talagrand, M.: Spin Glasses: A Challenge for Mathematicians: Cavity and Mean Field Models. Ergebnisse der Mathematik und ihrer Grenzgebiete. 3. Folge. A Series of Modern Surveys in Mathematics, {\bf 46}, Springer-Verlag, Berlin (2003)
  		
  		\bibitem{Talagrand} 
  		Talagrand, M.: The Parisi formula. {\it Ann. of Math. (2)}, {\bf 163}, no. 1, 221--263 (2006)
%  		
%  		\bibitem{TalagrandSphh} 
%  		Talagrand, M.: The free energy of the spherical mean-field model. {\it Probab. Theory Relat. Fields}, {\bf 134}, 339--382 (2006)
%  		
		\bibitem{T11}	Talagrand, M.: Mean field models for spin glasses. Ergebnisse der Mathematik und ihrer Grenzgebiete. 3. Folge. A Series of Modern Surveys in Mathematics, {\bf 55}, Springer-Verlag, Berlin (2011)
  		
  	\end{thebibliography}
\end{document}